\theoremstyle{plain}
\newtheorem{theorem}{Theorem}[section]
\newtheorem{corollary}[theorem]{Corollary}
\newtheorem{lemma}[theorem]{Lemma}
\theoremstyle{definition}
\begin{document}

\title[A note on the squares of the form\ldots]{A note on the squares of the form $\prod_{k=1}^n (2k^2+l)$ with $l$ odd}
\author[Russelle Guadalupe]{Russelle Guadalupe}
\address{Institute of Mathematics, University of the Philippines-Diliman\\
	Quezon City 1101, Philippines}
\email{rguadalupe@math.upd.edu.ph}

\renewcommand{\thefootnote}{}

\footnote{2010 \emph{Mathematics Subject Classification}: Primary 11D09; Secondary 11C08, 11A15.}

\footnote{\emph{Key words and phrases}: Quadratic polynomials, Diophantine equation, quadratic reciprocity.}

\renewcommand{\thefootnote}{\arabic{footnote}}
\setcounter{footnote}{0}

\begin{abstract}
	Let $l$ be a positive odd integer. Using Cilleruelo's method, we establish an explicit lower bound $N_l$ depending on $l$ such that for all $n\geq N_l$, $\prod_{k=1}^n (2k^2+l)$ is not a square. As an application, we determine all values of $n$ such that $\prod_{k=1}^n (2k^2+l)$ is a square for certain values of $l$.
\end{abstract}

\maketitle

\section{Introduction}

For positive integers $a,c$ and $n$ with $\gcd(a,c)=1$, define $P_{a,c}(n) := \prod_{k=1}^n (ak^2+c)$. The question of determining whether the sequence $\{P_{a,c}(n)\}_{n\geq 1}$ contains infinitely many squares has been extensively studied. In 2008, Amdeberhan, Medina, and Moll \cite{amdeber} conjectured that $P_{1,1}(n)$ is not a square for $n\geq 4$ and $P_{4,1}(n)$ is not a square for $n\geq 1$. In addition, they computationally verified that $P_{1,1}(n)$ is not a square for $n \leq 10^{3200}$. Shortly, Cilleruelo \cite{cille} proved the conjecture for $P_{1,1}(n)$ and showed that $P_{1,1}(n)$ is square only for $n=3$. Fang \cite{fang} settled the conjecture for $P_{4,1}(n)$. Yang, Togb\'{e}, and He \cite{yangtog} found all positive integer solutions to the equation $P_{a,c}(n) = y^l$ for coprime integers $a, c\in \{1,\ldots, 20\}$ and $l\geq 2$. Yin, Tan, and Luo \cite{yintan} obtained the $p$-adic valuation of $P_{1,21}(n)$ for all primes $p$ and showed that $P_{1,21}(n)$ is not a square for all $n\geq 1$. Chen, Wang and Hu \cite{chenwh} proved that $P_{1,23}(n)$ is a square only for $n=3$.\\

Ho \cite{ptungho} studied the sequence $\{P_{1,m^2}(n)\}_{n\geq 1}$ for $m\geq 1$ and proved that if $m$ has divisors of the form $4q+1$ and $N=\max\{m,10^8\}$, then $P_{1,m^2}(n)$ is not a square for all $n\geq N$. Recently, Zhang and Niu \cite{zhangniu} generalized Ho's result by showing that there is a positive integer $N_q$ depending on $q$ such that $P_{1,q}(n)$ is not a square for all $n\geq N_q$. Motivated by their work, we consider the problem of finding squares of the form $P_{2,l}(n)$ for a positive odd integer $l$. In this paper, we apply Cilleruelo's technique to prove the following result.

\begin{theorem}
\label{th:thm1}
Let $l$ be a positive odd integer. Then there exists a positive integer $N_l$ depending on $l$ such that for all $n\geq N_l$, $P_{2,l}(n)=\prod_{k=1}^n (2k^2+l)$ is not a square. 
\end{theorem}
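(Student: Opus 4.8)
The plan is to compare the size of $P_{2,l}(n)$ with the part of its prime factorisation supported below $2n$, exploiting that a square cannot have a prime factor exceeding $2n$. For that, fix a prime $p>2n$. Once $n$ is past an explicit ($l$-dependent) bound we have $2k^2+l\le 2n^2+l<p^2$ for every $k\le n$, so $v_p(2k^2+l)\le 1$ and hence $v_p\bigl(P_{2,l}(n)\bigr)=\#\{k\le n:p\mid 2k^2+l\}$. The congruence $2x^2+l\equiv 0\pmod p$ is solvable only if $(\frac{-2l}{p})=1$, and then its roots are $k\equiv\pm\alpha_p\pmod p$ with $0<\alpha_p<p/2$; inside $[1,n]\subset[1,p)$ the class $\alpha_p$ contributes a value iff $\alpha_p\le n$, while the class $p-\alpha_p\in(p/2,p)$ contributes none since $p-\alpha_p>p/2>n$. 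Thus $v_p\bigl(P_{2,l}(n)\bigr)\in\{0,1\}$, and if $p\mid P_{2,l}(n)$ its exponent is $1$, odd, so $P_{2,l}(n)$ is not a square. It therefore suffices to prove that for all large $n$ the integer $P_{2,l}(n)$ has a prime factor $>2n$, i.e.\ that $\sum_{p\le 2n}v_p\bigl(P_{2,l}(n)\bigr)\log p<\log P_{2,l}(n)$.

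Now $\log P_{2,l}(n)=\sum_{k\le n}\log(2k^2+l)=n\log 2+2\log n!+\sum_{k\le n}\log\!\left(1+\tfrac{l}{2k^2}\right)=2n\log n+O(n)$, the last sum being positive and bounded by $\log\prod_{k\ge1}(1+\tfrac{l}{2k^2})=O_l(1)$. For the left-hand side write $v_p\bigl(P_{2,l}(n)\bigr)=\sum_{k\le n}v_p(2k^2+l)=\sum_{j\ge1}\#\{k\le n:p^j\mid 2k^2+l\}$; by Hensel's lemma, for odd $p\nmid l$ the congruence $2x^2+l\equiv 0\pmod{p^j}$ has exactly $2$ solutions when $(\frac{-2l}{p})=1$ and none when $(\frac{-2l}{p})=-1$, so in the split case $v_p\bigl(P_{2,l}(n)\bigr)=2v_p(n!)+T_p$ with $0\le T_p\log p\le 2\log(2n^2+l)=O(\log n)$, in the inert case $v_p\bigl(P_{2,l}(n)\bigr)=0$, and the finitely many primes dividing $2l$ contribute $O_l(n)$ altogether. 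Hence $\sum_{p\le 2n}v_p\bigl(P_{2,l}(n)\bigr)\log p=2\sum_{p\le n,\ (\frac{-2l}{p})=1}v_p(n!)\log p+O_l(n)$. Using Legendre's formula $v_p(n!)=\tfrac{n-s_p(n)}{p-1}$ together with Mertens' theorem for the progression picked out by the real character $\chi=(\frac{-2l}{\cdot})$, namely $\sum_{p\le n,\ \chi(p)=1}\tfrac{\log p}{p}=\tfrac12\log n+O_l(1)$ (which reduces to the boundedness of $\sum_{p\le n}\tfrac{\chi(p)\log p}{p}$), the right-hand side equals $n\log n+O_l(n)$. Therefore $\sum_{p>2n}v_p\bigl(P_{2,l}(n)\bigr)\log p=n\log n+O_l(n)$, which is positive once $n$ exceeds an explicit $N_l$ — so $P_{2,l}(n)$ has a prime factor $>2n$ and, by the first paragraph, is not a square. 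This is exactly Cilleruelo's mechanism: the prime‑power mass of $P_{2,l}(n)$ supported below $2n$ is only $\sim n\log n$, half of $\log P_{2,l}(n)\sim 2n\log n$, because only half the primes split in $\mathbb{Q}(\sqrt{-2l})$.

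The local computation of the first paragraph and Legendre's formula are routine; the part demanding care — and where the real work lies — is the Mertens estimate in the progression, and, for an \emph{explicit} $N_l$, tracking the $l$-dependence of all error terms: Stirling's formula, the Chebyshev bound $\pi(2n)\ll n/\log n$, the contribution of the primes $p\mid 2l$, and the effective bound for $\sum_{p\le n}\chi(p)\log p/p$ (governed essentially by $\log l$ and $L(1,\chi)^{-1}$). I expect this last, analytic, ingredient to be the main obstacle. Alternatively one could derive the existence of a prime factor $>2n$ — or directly of a prime $p$ with $v_p\bigl(P_{2,l}(n)\bigr)$ odd — from the equidistribution of the roots $\alpha_p/p$ of $2x^2+l\equiv 0\pmod p$ à la Duke–Friedlander–Iwaniec, but the elementary comparison above is both simpler and effective.
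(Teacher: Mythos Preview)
Your argument is correct and is Cilleruelo's method, the same as the paper's; the two differ only in packaging. The paper argues by contradiction: assuming $P_{2,l}(n)$ is a square forces every prime factor below $2n$ (their Lemma~\ref{le:lem4}), and a chain of inequalities then bounds the divergent sum $\sum_{p\le n,\,p\notin\mathcal S}\frac{\log p}{p-1}$ by an explicit constant depending on $l$ (their (\ref{eq:eq9})--(\ref{eq:eq10})). You instead compute directly that the mass $\sum_{p\le 2n}v_p\bigl(P_{2,l}(n)\bigr)\log p$ is only $n\log n+O_l(n)$, half of $\log P_{2,l}(n)\sim 2n\log n$, so some prime $>2n$ must divide $P_{2,l}(n)$, and your first paragraph shows its exponent is $1$. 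Both routes rest on the same ingredients---the split/inert dichotomy (Lemmas~\ref{le:lem5}--\ref{le:lem6}), a crude bound at the ramified primes (Lemma~\ref{le:lem7}), and Chebyshev/Mertens---and the analytic crux is identical: the paper needs $\sum_{p\notin\mathcal S}\frac{\log p}{p-1}\to\infty$, you need $\sum_{\chi(p)=1,\,p\le n}\frac{\log p}{p}=\tfrac12\log n+O_l(1)$, and these are equivalent, both amounting to $L(1,\chi)\ne 0$. Your write-up is more transparent about where this arithmetic input enters; the paper's formulation has the practical advantage that its right-hand side is a single explicit constant, so once one tabulates the non-split sum the numerical $N_l$ drops out immediately, as in Sections~4--7.
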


As an application of Theorem \ref{th:thm1}, we show that for certain values of $l$, there are finitely many squares of the form $P_{2,l}(n)$ for some integer $n\geq 1$. 

\begin{corollary}
\label{co:cor2}
$P_{2,1}(n)=\prod_{k=1}^n (2k^2+1)$ is not a square for all $n\geq 1$.
\end{corollary}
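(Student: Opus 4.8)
The plan is to combine Theorem~\ref{th:thm1} with a finite verification. Specializing Theorem~\ref{th:thm1} to $l=1$ yields an explicit integer $N_1$ such that $P_{2,1}(n)$ is not a square for every $n\geq N_1$, the value of $N_1$ being read off from the constants in the proof of that theorem. Hence everything reduces to showing that none of the finitely many integers $P_{2,1}(1),\dots,P_{2,1}(N_1-1)$ is a perfect square.

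To handle this finite range I would avoid forming the huge products $P_{2,1}(n)$ and instead propagate only their squarefree parts: put $s_0=1$ and let $s_n$ be the squarefree part of $s_{n-1}(2n^2+1)$, so that each step costs just one factorization of the modest integer $2n^2+1$, and $P_{2,1}(n)$ is a square precisely when $s_n=1$. One computes $s_1=3$, $s_2=3$, $s_3=57$, and so on, and the corollary follows once $s_n\neq 1$ has been checked for all $1\leq n<N_1$. A helpful observation that cuts down the work is that $2k^2+1$ is divisible by $3$ exactly when $3\nmid k$, and frequently by higher powers of $3$ (for instance $2\cdot 11^2+1=3^5$), so the exponent $v_3(P_{2,1}(n))$ is odd for many $n$; only the relatively few $n$ for which this $3$-adic valuation is even need closer attention, and for each of those one exhibits some other prime occurring to an odd power, exactly the mechanism underlying Theorem~\ref{th:thm1} but now in fully explicit finite form.

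The step I expect to be the main obstacle is ensuring that the bound $N_1$ produced by Theorem~\ref{th:thm1} is effective and small enough for the above check to be completed in practice. If the general estimate is too weak, one would refine it for the particular value $l=1$, where the divisibility behaviour at the prime $3$ is especially favourable, so as to shrink $N_1$ to a manageable size; the squarefree-part recursion then finishes the proof, and the remaining details are routine.
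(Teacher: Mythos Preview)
Your high-level strategy matches the paper's: invoke Theorem~\ref{th:thm1} (the paper obtains the explicit value $N_1=706310$ in Section~4) and then handle $1\le n<N_1$ by a finite check. The difference lies entirely in how the finite range is dispatched. You propose a brute-force recursion on the squarefree part $s_n$ of $P_{2,1}(n)$, which is sound and feasible (roughly $7\cdot10^5$ factorizations of integers below $10^{12}$), with a $3$-adic heuristic to trim cases. The paper instead uses a short \emph{prime-ladder} argument: whenever $p=2k_0^2+1$ is prime, the only $k$ with $p\mid 2k^2+1$ are $k\equiv\pm k_0\pmod p$, so $p$ divides $P_{2,1}(n)$ exactly once for $k_0\le n\le p-k_0-1$, forcing $P_{2,1}(n)$ to be a nonsquare on that whole interval. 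Four well-chosen values $k_0\in\{3,6,21,597\}$ (yielding the primes $19,73,883,712819$) already cover $3\le n\le 712221$, so the entire finite range collapses to four primality checks plus the trivial cases $n=1,2$. Your method would work, but the paper's device replaces a substantial computation with a nearly pen-and-paper verification and supplies explicit certificates; it is worth internalizing for similar problems.
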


\begin{corollary}
\label{co:cor3}
$P_{2,3}(n)=\prod_{k=1}^n (2k^2+3)$ is not a square for all $n\geq 1$.
\end{corollary}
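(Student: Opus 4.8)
The plan is to derive Corollary~\ref{co:cor3} from Theorem~\ref{th:thm1} together with a finite verification. Specializing the proof of Theorem~\ref{th:thm1} to the case $l=3$ yields a concrete value of $N_3$, so the first step is to chase the constants through that argument and record an explicit $N_3$, taken as small as the estimates allow. After that, only the finitely many cases $1\le n<N_3$ remain, and these must be handled directly (or by a short computation). I expect the extraction of a clean, genuinely explicit $N_3$ -- as opposed to an unwieldy one -- to be where most of the effort goes, since Theorem~\ref{th:thm1} as stated is only qualitative.

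For the range $1\le n<N_3$, rather than factoring each $P_{2,3}(n)$, I would exhibit for every such $n$ a prime $p$ with $v_p\bigl(P_{2,3}(n)\bigr)$ odd. The crucial elementary fact is that an odd prime $p$ dividing two distinct factors $2j^2+3$ and $2k^2+3$ with $1\le j<k\le n$ must divide $(k-j)(k+j)$; since $0<k-j<n$ and $0<k+j\le 2n-1$, this forces $p<2n$. Consequently, if $2m^2+3$ has a prime factor $p$ with $p>n+m$ for some $m\in\{1,\dots,n\}$, then $p$ divides no other factor, so it divides $P_{2,3}(n)$ to the first power only (a routine estimate also rules out $p^2\mid 2m^2+3$ here), and $P_{2,3}(n)$ cannot be a square. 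It should be routine to check that such an $m$ exists for every $n<N_3$; for any residual sporadic $n$ one can instead compute $v_5\bigl(P_{2,3}(n)\bigr)$ directly, using that $5=2\cdot1^2+3$ and $5\mid 2k^2+3\iff k\equiv\pm1\pmod 5$ to pin down its parity.

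Assembling the pieces, Theorem~\ref{th:thm1} disposes of $n\ge N_3$ and the valuation argument disposes of $1\le n<N_3$, so $P_{2,3}(n)$ is never a square; Corollary~\ref{co:cor2} follows by the same scheme with $3$ replaced by $1$ throughout, where now $3=2\cdot1^2+1$ and $3\mid 2k^2+1\iff 3\nmid k$ play the auxiliary role. The main obstacle is quantitative: keeping $N_3$ (and $N_1$) small enough that the leftover finite check is genuinely short, which may require sharpening the inequalities in the proof of Theorem~\ref{th:thm1} beyond what the qualitative statement needs.
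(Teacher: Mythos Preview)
Your proposal is correct and follows essentially the same route as the paper: specialize the constants in the proof of Theorem~\ref{th:thm1} to obtain an explicit $N_3$ (the paper gets $N_3=2189634$), then cover $1\le n<N_3$ by exhibiting, for each such $n$, a prime dividing $P_{2,3}(n)$ to the first power via the observation that a prime factor of $2m^2+3$ exceeding $n+m$ can divide no other term. The paper implements the finite check by choosing $m\in\{2,8,37,1048\}$ with $2m^2+3$ itself prime, which makes your ``$p^2\nmid 2m^2+3$'' worry automatic and yields four overlapping intervals $[m,\,2m^2+2-m]$ covering $2\le n\le 2189633$; no fallback to $v_5$ is needed.
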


\begin{corollary}
\label{co:cor4}
$P_{2,7}(n)=\prod_{k=1}^n (2k^2+7)$ is a square only for $n=1$.
\end{corollary}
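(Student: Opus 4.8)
The plan is to combine Theorem~\ref{th:thm1} with an explicit analysis of a bounded range of $n$. First I would make the constant in Theorem~\ref{th:thm1} concrete for $l=7$: unwinding its proof yields a numerical value $N_7$ with the property that $P_{2,7}(n)$ is not a square whenever $n\ge N_7$. It then remains to decide the status of $P_{2,7}(n)=\prod_{k=1}^{n}(2k^2+7)$ for $1\le n< N_7$, the assertion being that among these it is a square only for $n=1$, where $2\cdot1^2+7=9$.

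To handle the finite range efficiently I would first use the prime $p=7$ dividing $l$. Since $2k^2+7\equiv 2k^2\pmod 7$, we have $7\mid 2k^2+7$ iff $7\mid k$; and if $k=7j$ then $2k^2+7=7(14j^2+1)$ with $14j^2+1\equiv 1\pmod 7$, so $v_7(2k^2+7)=1$ in that case and $0$ otherwise. Hence $v_7\bigl(P_{2,7}(n)\bigr)=\lfloor n/7\rfloor$, and a square value forces $\lfloor n/7\rfloor$ to be even, i.e.\ $n\bmod 14\in\{0,1,\dots,6\}$. To shrink the range further I would carry out the $p$-adic valuation bookkeeping at the heart of Cilleruelo's method at a few more small primes: for each odd prime $p$ for which $-7\cdot 2^{-1}$ is a quadratic residue mod $p$ (recognized via quadratic reciprocity), the roots of $2k^2+7\equiv 0$ mod $p$ lift uniquely through all powers of $p$ by Hensel's lemma, so $v_p\bigl(P_{2,7}(n)\bigr)=\sum_{j\ge1}\#\{k\in[1,n]:\,2k^2+7\equiv 0\pmod{p^j}\}$ is an explicit function of $n$ modulo small powers of $p$, whose parity eliminates a further positive proportion of residue classes of $n$. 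I would also keep Cilleruelo's large-prime observation available: if the largest prime factor of the last factor $2n^2+7$ exceeds $2n$, then it divides $P_{2,7}(n)$ exactly once, so $P_{2,7}(n)$ is not a square. Combining the parity constraints from $p=7$ and the auxiliary small primes with this large-prime constraint should reduce $[1,N_7)$ to a short, explicit list of candidate $n$, which one then checks directly: $P_{2,7}(1)=9$ is a square, while $P_{2,7}(2)=135$, $P_{2,7}(3)=3375$, $P_{2,7}(4)=131625$, and the remaining listed values, are not.

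I expect the main obstacle to be quantitative rather than conceptual. Everything depends on how large $N_7$ is once the proof of Theorem~\ref{th:thm1} is made fully explicit, and on checking that the combination of the $p=7$ identity, the auxiliary small-prime parities, and the large-prime argument genuinely exhausts the interval below $N_7$ rather than leaving a gap that forces a long computation. It is also worth noting that the cleanest obstruction, at $p=7$, does \emph{not} rule out $n=1$ (indeed $\lfloor 1/7\rfloor=0$ is even), so $n=1$ is a true exceptional value: the final step must both confirm that $9$ is a square and dispose of every $n$ with $2\le n< N_7$.
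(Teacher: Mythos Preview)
Your overall plan---make $N_7$ explicit via Theorem~\ref{th:thm1}, then dispose of the finite range $1\le n<N_7$---is exactly the paper's strategy, and your arithmetic for $v_7\bigl(P_{2,7}(n)\bigr)=\lfloor n/7\rfloor$ is correct. Where you diverge is in how the finite range is handled.

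The paper does not use small-prime parity filtering at all. Instead it relies \emph{only} on the large-prime mechanism, but in a sharper form than the one you state. Rather than asking, for each individual $n$, whether the last factor $2n^2+7$ has a prime divisor exceeding $2n$, the paper picks a few specific $k_0$ for which $q=2k_0^2+7$ is itself prime; then $q$ divides $P_{2,7}(n)$ to the first power for every $n$ with $k_0\le n\le q-k_0-1$, since the only residues of $k$ modulo $q$ with $q\mid 2k^2+7$ are $\pm k_0$. Four well-chosen values $k_0\in\{6,15,39,1041\}$ yield overlapping intervals that blanket $[6,N_7)$ with $N_7=2142500$, and $1\le n\le 5$ is checked by hand. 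This turns a potential search over two million values of $n$ into a search for four primes of the shape $2k_0^2+7$.

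Your route would also succeed in principle, but the congruence sieves at $p=7$ and a handful of other small primes only thin the candidates by a bounded factor; you would still be left with on the order of $10^5$--$10^6$ values of $n$ for which you must verify case by case that $2n^2+7$ (or some earlier factor) has a prime divisor exceeding $2n$. That is a genuine computation rather than a short argument. The interval-covering version of the large-prime idea is what makes the paper's finite check essentially trivial, and it is worth adopting in place of (or at least ahead of) the $p$-adic parity bookkeeping.
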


We organize the paper as follows. In Section 2, we present several preliminary lemmas needed for the proof of Theorem \ref{th:thm1}. In Sections 3 and 4, we prove Theorem \ref{th:thm1} by working on two separate cases: $l\geq 3$ and $l=1$. In Sections 5, 6 and 7, we prove Corollaries \ref{co:cor2}, \ref{co:cor3} and \ref{co:cor4} using Theorem \ref{th:thm1}. 

\section{Preliminaries}

In this section, we list some preliminary results that are used in the proofs of our main theorem. Throughout this section, we denote a prime by $p$ and a positive odd integer by $l$.

\begin{lemma}
\label{le:lem4} Let $n > \sqrt{l/2}$ be such that $P_{2,l}(n)$ is a square and let $p$ be a prime divisor of $P_{2,l}(n)$. Then $p < 2n$.
\end{lemma}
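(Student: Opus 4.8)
The plan is to argue by contradiction: assume $p \geq 2n$ and show that then $v_p(P_{2,l}(n))$ is odd (in fact equal to $1$), contradicting the hypothesis that $P_{2,l}(n)$ is a square. First I would record two easy normalizations. Since $l$ is odd, every factor $2k^2+l$ is odd, so $P_{2,l}(n)$ is odd and $p$ is an odd prime; as $2n$ is even this upgrades the assumption to $p > 2n$. Next, $p \nmid l$: if $p \mid l$ and $p \mid 2k^2+l$ for some $1 \le k \le n$, then $p \mid 2k^2$, so $p \mid k$ (as $p$ is odd), forcing $k \ge p > 2n > n$, a contradiction.

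The heart of the argument is a counting step. The congruence $2x^2 + l \equiv 0 \pmod p$ has at most two solutions modulo $p$, and since $p \nmid l$ these are of the form $x \equiv \pm k_0 \pmod p$ with $1 \le k_0 \le p-1$ and $k_0 \ne p-k_0$. Choosing the representative with $k_0 \le (p-1)/2$, the other class is represented by $p - k_0 \ge (p+1)/2 > n$, since $p > 2n$. Hence among $k \in \{1,\dots,n\}$ only the class of $k_0$ can occur, and because $p > n$ it occurs for at most one value of $k$ (namely $k = k_0$, if $k_0 \le n$). Therefore at most one of the factors $2k^2+l$, $1 \le k \le n$, is divisible by $p$.

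To finish I would show that factor is divisible by $p$ only to the first power. Using $n > \sqrt{l/2}$, i.e. $l < 2n^2$, every factor satisfies
\[
2k^2 + l \le 2n^2 + l < 4n^2 \le p^2,
\]
so $v_p(2k^2+l) \le 1$ for all $k$. Combining with the counting step gives $v_p(P_{2,l}(n)) \le 1$. But $p \mid P_{2,l}(n)$ forces $v_p(P_{2,l}(n)) \ge 1$, while $P_{2,l}(n)$ being a square forces $v_p(P_{2,l}(n))$ to be even; this is the desired contradiction, so $p < 2n$.

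I do not expect a serious obstacle here; the argument is Cilleruelo's local-valuation bound adapted to the polynomial $2k^2+l$. The only points requiring care are the parity remark that promotes $p \ge 2n$ to $p > 2n$ (so that the two solution classes cannot both meet $\{1,\dots,n\}$), the verification $p \nmid l$, and the inequality $2n^2 + l < p^2$, all of which follow directly from $l$ odd, $p$ odd, and $n > \sqrt{l/2}$ together with $p \ge 2n$.
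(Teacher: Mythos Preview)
Your proof is correct and is essentially the same argument as the paper's, just repackaged as a contradiction. The paper argues directly: from $p^2\mid P_{2,l}(n)$ either $p^2$ divides a single factor $2k^2+l$ (so $p\le\sqrt{2n^2+l}<2n$) or $p$ divides two distinct factors (so $p\mid 2(k_2-k_1)(k_2+k_1)$ and hence $p<2n$); your counting step is the contrapositive of the second case and your size bound $2k^2+l<4n^2\le p^2$ is the contrapositive of the first, with a bit of extra care (the parity upgrade $p>2n$ and the check $p\nmid l$) that the paper leaves implicit.
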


\begin{proof}
Since $P_{2,l}(n)$ is a square and $p\mid P_{2,l}(n)$, we have $p^2\mid P_{2,l}(n)$. We consider two cases:
\begin{enumerate}
	\item Suppose $p^2\mid 2k^2+l$ for some $1\leq k\leq n$. Then $p\leq \sqrt{2k^2+l}\leq \sqrt{2n^2+l} < 2n$ since $l < 2n^2$. 
	\item Suppose $p\mid 2k^2+l$ and $p\mid 2l^2+1$ for all $1\leq k< l\leq n$. Then $p\mid 2(l^2-k^2) = 2(l-k)(l+k)$. Since $l$ is odd, $P_{2,l}(n)$ is odd, so $p\nmid 2$. Thus, we see that
	either $p\mid l-k$ or $p\mid l+k$, which implies that $p\leq \max\{l-k,l+k\} < 2n$.
\end{enumerate}
\end{proof}

The above lemma implies that if $n > \sqrt{l/2}$, then $P_{2,l}(n) = \prod_{p < 2n}p^{\alpha_p}$, where $p$ is odd and
\begin{align}
\label{eq:eq1}
\alpha_p = \sum_{j\leq \log(2k^2+l)/\log p} \#\{1\leq k\leq n: p^j\mid 2k^2+l\}.
\end{align}
In particular, we have $\alpha_2=0$. Now, observe that for $1\leq k\leq n$, 
\[2k^2+l = k^{\log(2k^2+l)/\log k} > k^{\log(2n^2+l)/\log n}\]
where the last inequality follows from the fact that for $l\geq 1$, the function $f_l(x) := \log(2x^2+l)/\log x$ is decreasing on $(1,+\infty)$. Setting $\lambda = \log(2n^2+l)/\log n$, we have 
$P_{2,l}(n) > (n!)^{\lambda}$ and writing $n! = \prod_{p\leq n} p^{\beta_p}$, where
\begin{align}
\label{eq:eq2}
\beta_p = \sum_{j\leq \log n/\log p} \#\{1\leq k\leq n: p^j\mid k\} = \sum_{j\leq \log n/\log p}\left\lfloor\dfrac{n}{p^j}\right\rfloor, 
\end{align}
we deduce that 
\begin{align}
\label{eq:eq3}
\sum_{p\leq n} \beta_p\log p\leq \dfrac{1}{\lambda}\sum_{p < 2n}\alpha_p\log p.
\end{align}

\begin{lemma}
\label{le:lem5}
Let $p$ be an odd prime with $p\nmid l$ and let $j > 0$. Then the congruence $2x^2\equiv -l\pmod{p^j}$ has $1+\left(\frac{-2l}{p}\right)$ solutions. 
\end{lemma}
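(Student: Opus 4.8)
The plan is to reduce the congruence to a pure square congruence modulo a prime power and then count square roots of a unit. First I would use that $p$ is odd, so $2$ is invertible modulo $p^j$; hence $2x^2\equiv -l\pmod{p^j}$ is equivalent to $x^2\equiv c\pmod{p^j}$, where $c\equiv -l\cdot 2^{-1}\pmod{p^j}$. Since $p\nmid 2l$, the residue $c$ is a unit modulo $p^j$, so the problem becomes that of counting square roots of the unit $c$ in $\mathbb{Z}/p^j\mathbb{Z}$.

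Next I would invoke the standard fact that, for an odd prime $p$, a unit $c$ is a square modulo $p^j$ if and only if it is a square modulo $p$, and in that case it has exactly two square roots modulo $p^j$. The cleanest justification is Hensel's lemma applied to $f(x)=x^2-c$: if $x_0$ is a solution modulo $p$, then $f'(x_0)=2x_0\not\equiv 0\pmod p$ because $p$ is odd and $x_0$ is a unit, so $x_0$ lifts uniquely to a solution modulo $p^j$; the residues $x_0$ and $-x_0$ modulo $p$ are distinct since $p$ is odd, giving exactly two solutions. If $c$ is a nonsquare modulo $p$, then $x^2\equiv c\pmod{p^j}$ already has no solution modulo $p$, hence none modulo $p^j$. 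Therefore the number of solutions is $1+\left(\frac{c}{p}\right)$.

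Finally I would evaluate the Legendre symbol by multiplicativity: $\left(\frac{c}{p}\right)=\left(\frac{-l}{p}\right)\left(\frac{2^{-1}}{p}\right)=\left(\frac{-l}{p}\right)\left(\frac{2}{p}\right)=\left(\frac{-2l}{p}\right)$, where I used that $\left(\frac{2^{-1}}{p}\right)=\left(\frac{2}{p}\right)$, since both are $\pm 1$ and their product equals $\left(\frac{1}{p}\right)=1$. Substituting back yields the asserted count $1+\left(\frac{-2l}{p}\right)$.

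I do not anticipate any genuine obstacle; the only point deserving a little care is the uniqueness of the Hensel lift, equivalently the fact that the number of square roots of a unit is unchanged when passing from $p$ to $p^j$. This step is precisely where the hypothesis that $p$ is odd is needed, as it would fail for $p=2$.
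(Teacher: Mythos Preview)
Your proof is correct and follows essentially the same approach as the paper: reduce to a pure square congruence modulo $p^j$ and count square roots of a unit. The paper's variant is marginally slicker---it multiplies by $2$ to write $(2x)^2\equiv -2l\pmod{p^j}$ and then cites a standard result (Hua, Thm.~5.1), which produces the symbol $\left(\tfrac{-2l}{p}\right)$ directly and spares you the final Legendre-symbol manipulation.
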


\begin{proof}
This follows from rewriting the congruence as $(2x)^2\equiv -2l\pmod{p^j}$ and applying \cite[Thm. 5.1]{lkhua}.
\end{proof}

\begin{lemma}
\label{le:lem6}
Let $p$ be an odd prime with $p\nmid l$.
\begin{enumerate}
	\item If $\left(\frac{-2l}{p}\right) = -1$, then $\alpha_p = 0$.
	\item If $\left(\frac{-2l}{p}\right) = 1$, then $\frac{\alpha_p}{\lambda}-\beta_p\leq \frac{\log(2n^2+l)}{\log p}$.
\end{enumerate}
\end{lemma}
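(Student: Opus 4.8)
The plan is to combine the solution count of Lemma~\ref{le:lem5} with the explicit formulas \eqref{eq:eq1} and \eqref{eq:eq2} for $\alpha_p$ and $\beta_p$. For part (1), assume $\left(\frac{-2l}{p}\right)=-1$. Then Lemma~\ref{le:lem5} with $j=1$ shows that $2x^2\equiv -l\pmod p$ has no solution, so there is no $k$ with $p\mid 2k^2+l$, and a fortiori none with $p^j\mid 2k^2+l$ for any $j\ge1$. Hence every summand in \eqref{eq:eq1} vanishes and $\alpha_p=0$.

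For part (2), assume $\left(\frac{-2l}{p}\right)=1$. First I would rewrite \eqref{eq:eq1} as $\alpha_p=\sum_{j=1}^{J}\#\{1\le k\le n:\ p^j\mid 2k^2+l\}$ with $J:=\lfloor\log(2n^2+l)/\log p\rfloor$, using that $p^j\mid 2k^2+l$ forces $p^j\le 2k^2+l\le 2n^2+l$. By Lemma~\ref{le:lem5}, for each $j\ge1$ there are exactly two residue classes modulo $p^j$ solving $2x^2\equiv -l$, so the $k\in\{1,\dots,n\}$ counted at level $j$ lie in two residue classes mod $p^j$ and therefore number at most $2\lfloor n/p^j\rfloor+2$. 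Summing over $1\le j\le J$, noting that $\lfloor n/p^j\rfloor=0$ once $p^j>n$ and that $J\ge\lfloor\log n/\log p\rfloor$ (since $2n^2+l>n$), and comparing with \eqref{eq:eq2}, I get
\[
\alpha_p\ \le\ \sum_{j=1}^{J}\bigl(2\lfloor n/p^j\rfloor+2\bigr)\ =\ 2\beta_p+2J\ \le\ 2\beta_p+\frac{2\log(2n^2+l)}{\log p}.
\]

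To finish, I would use $\lambda=\log(2n^2+l)/\log n\ge2$, which holds because $2n^2+l\ge n^2$. Dividing the displayed inequality by $\lambda$ and using $\beta_p\ge0$ together with $0<2/\lambda\le1$ yields
\[
\frac{\alpha_p}{\lambda}-\beta_p\ \le\ \Bigl(\frac{2}{\lambda}-1\Bigr)\beta_p+\frac{2}{\lambda}\cdot\frac{\log(2n^2+l)}{\log p}\ \le\ \frac{\log(2n^2+l)}{\log p},
\]
which is the asserted bound. The only places needing care are keeping the summation range for $j$ straight, so that the floor terms match \eqref{eq:eq2} exactly and the count of leftover constant terms is precisely $J$, and the elementary estimate $\lambda\ge2$; beyond that the argument is a direct computation resting on Lemma~\ref{le:lem5}, with no substantial obstacle.
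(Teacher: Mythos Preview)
Your proof is correct and follows essentially the same approach as the paper: both parts rest on the solution count from Lemma~\ref{le:lem5}, and part~(2) combines that count with the elementary fact $\lambda\ge 2$ to compare $\alpha_p$ against $\beta_p$. The only cosmetic differences are that the paper uses the bound $2\lceil n/p^j\rceil$ (rather than your $2\lfloor n/p^j\rfloor+2$) and splits the sum over $j$ at $\log n/\log p$ instead of recognizing $\beta_p$ directly, but the underlying ideas and the resulting estimate are the same.
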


\begin{proof}
\begin{enumerate}
	\item If $\left(\frac{-2l}{p}\right) = -1$, then by Lemma \ref{le:lem5}, the congruence $2x^2+l\equiv 0\pmod {p^j}$ has no solutions for $j > 0$. Thus, $p^j\nmid 2k^2+l$ for all integers $j, k\geq 1$ and $\alpha_p=0$. 
	\item If $\left(\frac{-2l}{p}\right) = 1$, then by Lemma \ref{le:lem5}, the congruence $2x^2+l\equiv 0\pmod {p^j}$ has at most two solutions in an interval of length $p^j$ for each $j > 0$. From (\ref{eq:eq1}), we deduce that
	\begin{align}
	\label{eq:eq4}
	\alpha_p \leq \sum_{j\leq \log(2k^2+l)/\log p} 2\left\lceil\dfrac{n}{p^j}\right\rceil \leq \sum_{j\leq \log(2k^2+l)/\log p} \lambda\left\lceil\dfrac{n}{p^j}\right\rceil
	\end{align}
	Thus, we obtain
	\[\begin{aligned}
	\dfrac{\alpha_p}{\lambda}-\beta_p &\leq \sum_{j\leq \log n/\log p}\left(\left\lceil\dfrac{n}{p^j}\right\rceil-\left\lfloor\dfrac{n}{p^j}\right\rfloor\right)+\sum_{\log n/\log p < j \leq \log(2n^2+l)/\log p} \left\lceil\dfrac{n}{p^j}\right\rceil\\
	&\leq \sum_{j\leq \log n/\log p}1 +\sum_{\log n/\log p < j \leq \log(2n^2+l)/\log p} 1 = \dfrac{\log(2n^2+l)}{\log p}.
	\end{aligned}\]
\end{enumerate}
\end{proof}

\begin{lemma}
\label{le:lem7}
Suppose $p^s$ is the largest power of an odd prime $p$ dividing $l\geq 3$. Then $\alpha_p\leq \gamma_{l,p}(n)$, where
\[\begin{aligned}
\gamma_{l,p}(n) &= \dfrac{n}{2}\left(\dfrac{3p-2-p^{-s}}{(p-1)^2}-\dfrac{s+3}{p^s(p-1)}+\dfrac{4}{p^{s/2}(p-1)}\right)-\dfrac{2np^{s/2}}{(p-1)(2n^2+l)}\\
&+\dfrac{s(s+5)}{4}+2p^{s/2}\left(\dfrac{\log(2n^2+l)}{\log p}-s\right).
\end{aligned}\]
\end{lemma}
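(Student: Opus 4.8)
The plan is to evaluate $\alpha_p=\sum_{k=1}^n v_p(2k^2+l)$ by sorting the integers $1\le k\le n$ according to $a:=v_p(k)$. Writing $l=p^sl'$ with $p\nmid l'$ and $k=p^am$ with $p\nmid m$, one reads off from $2k^2+l=p^{2a}\bigl(2m^2+p^{\,s-2a}l'\bigr)$ when $2a\le s$ and $2k^2+l=p^s\bigl(2p^{\,2a-s}m^2+l'\bigr)$ when $2a\ge s$ (recall $p$ is odd and $p\mid l$) that
\[
v_p(2k^2+l)=\begin{cases}2a & \text{if }2a<s,\\ s & \text{if }2a>s,\\ s+v_p(2m^2+l') & \text{if }2a=s\ (\text{so }s\text{ is even}).\end{cases}
\]
Equivalently, in the layer form of (\ref{eq:eq1}): for $1\le j\le s$ we have $p^j\mid l$, so $p^j\mid 2k^2+l\iff p^{\lceil j/2\rceil}\mid k$, and the $j$-th layer has exactly $\lfloor n/p^{\lceil j/2\rceil}\rfloor$ solutions; for $j>s$, a solution forces $p^{s/2}\| k$, and substituting $k=p^{s/2}m$ reduces the count of the $j$-th layer to the number of $m\le n/p^{s/2}$ with $2m^2\equiv-l'\pmod{p^{\,j-s}}$.

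I would then estimate the two resulting pieces. For the ``arithmetic'' layers $1\le j\le s$, summing $\lfloor n/p^{\lceil j/2\rceil}\rfloor$ over $j$ (each value of $\lceil j/2\rceil$ occurring twice, with an unpaired top value when $s$ is odd), replacing $\lfloor x\rfloor$ by $x$, and completing the geometric series via $\sum_{a\ge1}p^{-a}=\tfrac1{p-1}$ and $\sum_{a\ge1}a\,p^{-a}=\tfrac{p}{(p-1)^2}$, contributes the $n$-linear terms $\tfrac n2\bigl(\tfrac{3p-2-p^{-s}}{(p-1)^2}-\tfrac{s+3}{p^s(p-1)}\bigr)$ of $\gamma_{l,p}(n)$, the finite truncation producing the negative $p^{-s}$ corrections. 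For the ``resonant'' layers $j>s$ I would invoke Lemma \ref{le:lem5}: $2m^2\equiv-l'\pmod{p^{\,j-s}}$ has $1+\left(\frac{-2l'}{p}\right)\le 2$ solutions in every block of $p^{\,j-s}$ consecutive integers, so the $j$-th layer has at most $2\lceil n\,p^{s/2}/p^{j}\rceil\le 2p^{s/2}\lceil n/p^{j}\rceil$ solutions, and only $j$ with $s<j\le\log(2n^2+l)/\log p$ can occur since $p^{j}\le 2k^2+l\le 2n^2+l$. Summing $2p^{s/2}\lceil n/p^{j}\rceil$ over these $j$ and again completing the geometric series gives $\tfrac{2n}{p^{s/2}(p-1)}+2p^{s/2}\bigl(\tfrac{\log(2n^2+l)}{\log p}-s\bigr)$ up to bounded slack, while truncating the series from above at $j\le\log(2n^2+l)/\log p$ forces the small negative term $-\tfrac{2np^{s/2}}{(p-1)(2n^2+l)}$; collecting the constant-order slack from all the layers into $\tfrac{s(s+5)}{4}$ completes $\alpha_p\le\gamma_{l,p}(n)$.

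The hard part is the precise bookkeeping of these floor/ceiling and truncation errors: each of the $\le s$ arithmetic layers and each of the $\le\log(2n^2+l)/\log p$ resonant layers contributes only an $O(1)$ (or $O(p^{-j})$) error when a fraction replaces $\lfloor\cdot\rfloor$ or $\lceil\cdot\rceil$ and when the geometric series is completed or truncated, and one must sum these carefully enough that they fit inside the allotted budgets $\tfrac{s(s+5)}{4}$ and $2p^{s/2}(\log(2n^2+l)/\log p-s)$ rather than a coarser estimate. A secondary nuisance is the parity of $s$: when $s$ is odd the resonant case $2a=s$ is vacuous and the arithmetic sum carries an extra unpaired term $\lfloor n/p^{(s+1)/2}\rfloor$, so one checks separately that the stated $\gamma_{l,p}(n)$ — which is written so as to be valid for either parity — still dominates $\alpha_p$, with the discrepancy absorbed by the nonnegative quantity $2p^{s/2}(\log(2n^2+l)/\log p-s)$.
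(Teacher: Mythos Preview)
Your layer decomposition into $j\le s$ and $j>s$, and your handling of the range $j>s$ via Lemma~\ref{le:lem5}, match the paper's argument. The gap is in the range $1\le j\le s$: your layer count $\lfloor n/p^{\lceil j/2\rceil}\rfloor$ is the correct one, but its sum over $j$ does \emph{not} equal the $n$-linear piece $\tfrac n2\bigl(\tfrac{3p-2-p^{-s}}{(p-1)^2}-\tfrac{s+3}{p^s(p-1)}\bigr)$ that you attribute to it. For even $s=2t$ one has $\sum_{j=1}^{s} n/p^{\lceil j/2\rceil}=2n(1-p^{-t})/(p-1)$, whose main term $2n/(p-1)$ is not $n(3p-2)/\bigl(2(p-1)^2\bigr)$; concretely, for $p=3,\ s=4$ your sum is $8n/9$ while the claimed formula gives $23n/27$. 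Since the $j>s$ contribution you describe accounts precisely for the remaining $n$-linear term $2n/\bigl(p^{s/2}(p-1)\bigr)$ and for the logarithmic term of $\gamma_{l,p}$, the bookkeeping cannot be made to close to the stated $\gamma_{l,p}(n)$.

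The paper reaches that particular $n$-linear piece by a different count: for $j\le s$ it asserts that $(2x)^2\equiv 0\pmod{p^j}$ has $j-\lceil j/2\rceil+1$ residues modulo $p^j$, bounds the $j$-th layer by $(j/2+1)\lceil n/p^j\rceil\le (j/2+1)(n/p^j+1)$, and then $\tfrac n2\sum_{j\le s}(j+2)p^{-j}$ produces exactly the stated expression while the ``$+1$'' parts sum to $s(s+5)/4$. That residue count $\lfloor j/2\rfloor+1$ is not the $p^{\lfloor j/2\rfloor}$ implicit in your formula $\lfloor n/p^{\lceil j/2\rceil}\rfloor$; the two agree only at $j=1$. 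So your outline, carried through, would deliver a different closed form from $\gamma_{l,p}(n)$ rather than the one in the lemma, and the ``precise bookkeeping'' you flag as the hard part cannot rescue the discrepancy because it is already visible at the level of the leading $n$-coefficient.
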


\begin{proof}
Write $l=p^sq$, where $p\nmid q$. We consider the number of solutions to the congruence $2x^2 \equiv -p^sq\pmod{p^j}$ with $j > 0$, which is equivalent to $(2x)^2\equiv -2p^sq\pmod{p^j}$. We work on two cases:
\begin{enumerate}
	\item Suppose $s\geq j$. Then $2x\equiv p^{\lceil j/2\rceil}, p^{\lceil j/2\rceil+1},\ldots, p^j\pmod{p^j}$, so there are exactly $j-\lceil j/2\rceil+1$ solutions in this case.
	\item Suppose $s < j$. Then $(2x)^2 = mp^j -2p^sq = p^s(mp^{j-s}-2q)$ for some positive integer $m$. Since $p$ does not divide $mp^{j-s}-2q$, we see that $s$ is even. Write $2x=p^{s/2}r$, where $x\in \{0,\ldots, p^j-1\}$ and $r\in \{0,\ldots, p^{j-s/2}-1\}$. The congruence now becomes $r^2\equiv -2q\pmod{p^{j-s}}$, and in view of Lemma \ref{le:lem5}, it has at most two solutions contained in each interval of length $p^{j-s}$. Thus, the congruence has at most $2p^{j-s/2}/p^{j-s} = 2p^{s/2}$ solutions contained in each interval of length $p^j$. 
\end{enumerate}
Combining these cases, we see that
\[\begin{aligned}
\alpha_p &\leq \sum_{j\leq s}\left(j-\left\lceil\dfrac{j}{2}\right\rceil+1\right)\left\lceil\dfrac{n}{p^j}\right\rceil+\sum_{s < j\leq \log(2n^2+l)/\log p}2p^{s/2}\left\lceil\dfrac{n}{p^j}\right\rceil\\
&\leq \sum_{j\leq s}\left(j-\left\lceil\dfrac{j}{2}\right\rceil+1\right)\left(\dfrac{n}{p^j}+1\right)+\sum_{s < j\leq \log(2n^2+l)/\log p}2p^{s/2}\left(\dfrac{n}{p^j}+1\right)\\
&\leq \sum_{j\leq s}\left(\dfrac{j}{2}+1\right)\left(\dfrac{n}{p^j}+1\right)+2p^{s/2}\left(\dfrac{n(\frac{1}{p^{s+1}}-\frac{1}{p(2n^2+l)})}{1-\frac{1}{p}}+\dfrac{\log(2n^2+l)}{\log p}-s\right)\\
&=\dfrac{n}{2}\sum_{j\leq s}\dfrac{j+2}{p^j}+\dfrac{s(s+5)}{4}+\dfrac{2n}{p^{s/2}(p-1)}-\dfrac{2np^{s/2}}{(p-1)(2n^2+l)}+2p^{s/2}\left(\dfrac{\log(2n^2+l)}{\log p}-s\right)\\
&=\gamma_{l,p}(n).
\end{aligned}\]
\end{proof}

\begin{lemma}
\label{le:lem8}
For all positive integers $n$, we have $\sum_{n< p < 2n}\log p \leq n\log 4$.
\end{lemma}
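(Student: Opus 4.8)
The plan is to prove the bound $\sum_{n < p < 2n} \log p \le n \log 4$ by relating the product $\prod_{n < p < 2n} p$ to a central binomial coefficient. First I would observe that every prime $p$ with $n < p < 2n$ divides $\binom{2n}{n} = \frac{(2n)!}{n!\,n!}$ exactly once: such a $p$ appears in the numerator $(2n)!$ with multiplicity $\lfloor 2n/p \rfloor = 1$ (since $p > n$ forces $\lfloor 2n/p\rfloor \le 1$ and $p < 2n$ forces it $\ge 1$), and it does not divide $n!$ at all (since $p > n$), so its exact power in $\binom{2n}{n}$ is $1$. Consequently $\prod_{n < p < 2n} p$ divides $\binom{2n}{n}$, and in particular $\prod_{n < p < 2n} p \le \binom{2n}{n}$.

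Next I would bound $\binom{2n}{n}$ from above by $4^n$. This is the standard estimate: $\binom{2n}{n}$ is the largest of the $2n+1$ terms in the binomial expansion of $(1+1)^{2n} = 4^n$ — more crudely, $\binom{2n}{n} \le \sum_{k=0}^{2n} \binom{2n}{k} = 4^n$. Taking logarithms of the chain $\prod_{n < p < 2n} p \le \binom{2n}{n} \le 4^n$ yields
\[
\sum_{n < p < 2n} \log p \le \log \binom{2n}{n} \le n \log 4,
\]
which is exactly the claimed inequality.

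I should also handle the degenerate small cases: if $n = 0$ or $n = 1$ the interval $(n, 2n)$ contains no primes, so the left side is an empty sum equal to $0 \le n \log 4$, and the argument above applies verbatim for $n \ge 2$ (or one simply notes the empty-product/empty-sum conventions make everything consistent for all $n \ge 1$). There is no real obstacle here — the only point requiring a moment's care is the $p$-adic valuation computation showing each such prime divides $\binom{2n}{n}$ exactly once, which is immediate from Legendre's formula or directly from $\lfloor 2n/p \rfloor - 2\lfloor n/p \rfloor = 1 - 0 = 1$ for $n < p < 2n$. This is a classical lemma (essentially the first step in Chebyshev-type / Erdős-style proofs of bounds on $\vartheta(x)$), so I would keep the write-up brief.
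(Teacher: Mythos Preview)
Your proposal is correct and follows exactly the same approach as the paper: both show that each prime $p$ with $n<p<2n$ divides $\binom{2n}{n}$ (exactly once), then bound $\binom{2n}{n}\le 4^n$ via the binomial theorem and take logarithms. Your write-up is somewhat more detailed in justifying the valuation step and treating the trivial cases, but the argument is identical in substance.
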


\begin{proof}
	Observe that for primes $p$ with $n < p < 2n$, $p$ appears exactly once in the prime factorization of $\binom{2n}{n}$. Thus, by binomial theorem, we get $\prod_{n < p < 2n}p\leq \binom{2n}{n}\leq 4^n$, which is equivalent to the desired inequality.
\end{proof}

\begin{lemma}
\label{le:lem9}
Let $\pi(n)$ be the number of primes at most $n$. Then for all positive integers $n$, we have $\pi(n)\leq 2\log 4\frac{n}{\log n}+\sqrt{n}$.
\end{lemma}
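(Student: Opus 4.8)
The plan is to deduce the bound from two standard ingredients: Chebyshev's estimate $\vartheta(n):=\sum_{p\le n}\log p\le n\log 4$, and a separation of the primes up to $n$ according to whether they lie below or above $\sqrt n$. The idea behind the splitting is that a prime $p>\sqrt n$ contributes more than $\tfrac12\log n$ to $\vartheta(n)$, so only $O(n/\log n)$ of them can occur, while the primes $\le\sqrt n$ number at most $\sqrt n$, which is precisely the secondary term in the statement.

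In detail, I would write $\pi(n)=\pi(\lfloor\sqrt n\rfloor)+\#\{p\ \text{prime}:\lfloor\sqrt n\rfloor<p\le n\}$. There are at most $\lfloor\sqrt n\rfloor\le\sqrt n$ positive integers not exceeding $\lfloor\sqrt n\rfloor$, so $\pi(\lfloor\sqrt n\rfloor)\le\sqrt n$. For the remaining primes, each $p$ with $\lfloor\sqrt n\rfloor<p\le n$ satisfies $p\ge\lfloor\sqrt n\rfloor+1>\sqrt n$, hence $\log p>\tfrac12\log n$, and therefore
\[
\tfrac12(\log n)\cdot\#\{p:\lfloor\sqrt n\rfloor<p\le n\}<\sum_{\lfloor\sqrt n\rfloor<p\le n}\log p\le\vartheta(n)\le n\log 4,
\]
so that $\#\{p:\lfloor\sqrt n\rfloor<p\le n\}<2\log 4\,\tfrac{n}{\log n}$ for $n\ge 2$. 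Adding the two estimates gives the claim (the case $n=1$ being trivial).

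The real content is the Chebyshev bound $\vartheta(n)\le n\log 4$, which I expect to be the main obstacle; I would prove it by strong induction on $n$ in the same spirit as Lemma~\ref{le:lem8}, after checking a few small cases as the base. If $n>2$ is even it is not prime, so $\vartheta(n)=\vartheta(n-1)\le(n-1)\log 4$ by induction. If $n=2m+1$ is odd with $m\ge 1$, then each prime $p$ with $m+1<p\le 2m+1$ divides $(2m+1)!$ exactly once and divides neither $m!$ nor $(m+1)!$, so the product $\prod_{m+1<p\le 2m+1}p$ divides $\binom{2m+1}{m}$; since $2\binom{2m+1}{m}=\binom{2m+1}{m}+\binom{2m+1}{m+1}\le\sum_{k}\binom{2m+1}{k}=2^{2m+1}$, we get $\binom{2m+1}{m}\le 4^m$ and hence $\vartheta(2m+1)-\vartheta(m+1)\le m\log 4$, which together with the inductive bound $\vartheta(m+1)\le(m+1)\log 4$ yields $\vartheta(2m+1)\le(2m+1)\log 4$. (One could instead avoid isolating $\vartheta$ and telescope Lemma~\ref{le:lem8} directly over the dyadic blocks between $\sqrt n$ and $n$, but bookkeeping the floors and the geometric series makes that route no shorter, so I would take the $\vartheta$-based approach.)
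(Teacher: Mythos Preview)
Your proof is correct and is essentially the same as the paper's: the paper quotes the inequality $\pi(n)\le n^{1-\lambda}+\tfrac{1}{(1-\lambda)\log n}\sum_{p\le n}\log p$ from Hardy--Wright and sets $\lambda=\tfrac12$, which is exactly your splitting at $\sqrt n$ combined with $\log p>\tfrac12\log n$ for $p>\sqrt n$, and then invokes the Chebyshev bound $\prod_{p\le n}p\le 4^n$ from Tenenbaum, which you instead prove directly by the standard Erd\H{o}s induction. The only difference is that you supply the short proofs where the paper cites references.
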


\begin{proof}
	From \cite[p. 459, (22.4.2)]{hardyw}, we have 
	\[\pi(n)\leq n^{1-\lambda} + \dfrac{1}{(1-\lambda)\log n}\sum_{p\leq n}\log p\]
	for all $n\geq 1$ and $\lambda\in (0,1)$. Setting $\lambda = \frac{1}{2}$ and using $\prod_{p < n}p \leq 4^n$ (see \cite[Thm. 1.4]{tenen}), the desired inequality follows.
\end{proof}

\section{Proof of Theorem \ref{th:thm1} for odd $l\geq 3$}
We now give the proof of Theorem \ref{th:thm1} for odd $l\geq 3$ as follows. Suppose $n > \sqrt{l/2}$ and define the set $\mathcal{S} = \{p: p\text{ is an odd prime with }(-\frac{2l}{p})=1\}$. Then inequality (\ref{eq:eq3}) becomes 
\begin{align}
\label{eq:eq5}
\sum_{p\leq n} \beta_p\log p\leq \sum_{\substack{p \leq n\\p\in\mathcal{S}}} \dfrac{\alpha_p}{\lambda}\log p+\sum_{\substack{p \leq n\\p\notin\mathcal{S}}} \dfrac{\alpha_p}{\lambda}\log p+\sum_{n < p < 2n} \dfrac{\alpha_p}{\lambda}\log p.
\end{align}
Write $l = \prod_{i=1}^r p_i^{e_i}$, where $p_1,\ldots,p_r$ are odd primes. By Lemma \ref{le:lem6}, we have 
\begin{align}
\label{eq:eq6}
\sum_{\substack{p\leq n\\ p\notin\mathcal{S}}} \beta_p\log p \leq \sum_{\substack{p \leq n\\p\in\mathcal{S}}} \log(2n^2+l)+\dfrac{1}{\lambda}\sum_{i=1}^{r}\alpha_{p_i}\log p_i+\sum_{n < p < 2n} \dfrac{\alpha_p}{\lambda}\log p.
\end{align}
Observe that if $p > n$, then $\alpha_p \leq \lambda$ from (\ref{eq:eq4}), so applying Lemma \ref{le:lem8} in (\ref{eq:eq6}) yields
\begin{align}
\label{eq:eq7}
\sum_{\substack{p\leq n\\ p\notin\mathcal{S}}} \beta_p\log p \leq \log(2n^2+l)\sum_{\substack{p \leq n\\p\in\mathcal{S}}} 1+\dfrac{1}{\lambda}\sum_{i=1}^{r}\alpha_{p_i}\log p_i+n\log 4.
\end{align}
On the other hand, if $p \leq n$, then from (\ref{eq:eq2}) we have
\[\begin{aligned}
\beta_p &= \sum_{j\leq \log n/\log p}\left\lfloor\dfrac{n}{p^j}\right\rfloor\geq \sum_{j\leq \log n/\log p}\left(\dfrac{n}{p^j}-1\right)\geq n\sum_{j\leq \log n/\log p}\dfrac{1}{p^j}-\dfrac{\log n}{\log p}\\
&\geq n\left(\dfrac{1-\frac{1}{n}}{1-\frac{1}{p}}-1\right)-\frac{\log n}{\log p} = \dfrac{n-p}{p-1}-\dfrac{\log n}{\log p}\\
&\geq \dfrac{n-1}{p-1}-\dfrac{\log (2n^2+l)}{\log p}.
\end{aligned}\]
Thus, from (\ref{eq:eq7}) we get
\begin{align}
\label{eq:eq8}
(n-1)\sum_{\substack{p\leq n\\ p\notin\mathcal{S}}}\dfrac{\log p}{p-1}\leq \log(2n^2+l)\pi(n)+\dfrac{1}{\lambda}\sum_{i=1}^{r}\alpha_{p_i}\log p_i+n\log 4
\end{align}
and applying Lemma \ref{le:lem9}, we obtain 
\begin{align}
\sum_{\substack{p\leq n\\ p\notin\mathcal{S}}}\dfrac{\log p}{p-1}\leq \dfrac{\log(2n^2+l)}{n-1}\left(2\log 4\dfrac{n}{\log n}+\sqrt{n}\right)+\sum_{i=1}^{r}\dfrac{\alpha_{p_i}\log p_i}{\lambda (n-1)}+\dfrac{n\log 4}{n-1}.
\end{align}
Finally, using Lemma \ref{le:lem7}, we arrive at
\begin{align}
\label{eq:eq9}
\sum_{\substack{p\leq n\\ p\notin\mathcal{S}}}\dfrac{\log p}{p-1}\leq \dfrac{\log(2n^2+l)}{n-1}\left(2\log 4\dfrac{n}{\log n}+\sqrt{n}\right)+\dfrac{\log n}{\log(2n^2+l)}\sum_{i=1}^{r}\dfrac{\gamma_{l,p_i}(n)\log p_i}{n-1}+\dfrac{n\log 4}{n-1}.
\end{align}
As $n$ grows sufficiently large, the right-hand side of (\ref{eq:eq9}) approaches the limit
\begin{align}
\label{eq:eq10}
10\log 2+ \dfrac{1}{4}\sum_{i=1}^r\left(\dfrac{3p_i-2-p_i^{-e_i}}{(p_i-1)^2}-\dfrac{e_i+3}{p_i^{e_i}(p_i-1)}+\dfrac{4}{p_i^{e_i/2}(p_i-1)}\right)\log p_i,
\end{align}
while the left-hand side becomes unbounded since the prime number theorem implies that 
\[\sum_{p\leq n}\dfrac{\log p}{p-1} = \log n-\gamma + o(1)\]
where $\gamma$ is the Euler-Mascheroni constant (see \cite{tenen}). Thus, there exists a positive integer $N_l$ such that $\sum_{p\leq n, p\notin\mathcal{S}} (\log p)/(p-1)$ is greater than (\ref{eq:eq10}) for all $n\geq N_l$. Hence, we conclude that $P_{2,l}(n)$ is not a square for all $n\geq N_l$.

\section{Proof of Theorem \ref{th:thm1} for $l=1$}
We now give the proof of Theorem \ref{th:thm1} for $l=1$ as follows. Suppose $P_{2,1}(n)$ is a square for some $n\geq 1$ and let $p$ be a prime divisor of $P_{2,1}(n)$. By Lemma \ref{le:lem4}, we have $p < 2n$. Since $p$ divides $2k^2+1$ for some $1\leq k\leq n$, we have $(-\frac{2}{p})= 1$, so that $p\equiv 1,3\pmod{8}$. Thus, we have 
\[P_{2,1}(n) = \prod_{\substack{p < 2n\\ p\equiv 1,3\pmod{8}}} p^{\alpha_p}\]
and in view of $P_{2,1}(n) > (n!)^{\lambda}$, we see that
\begin{align}
\label{eq:eq12}
\sum_{p\leq n} \beta_p\log p\leq \dfrac{1}{\lambda}\sum_{\substack{p < 2n\\ p\equiv 1,3\pmod{8}}}\alpha_p\log p.
\end{align}
By Lemma \ref{le:lem6}, we have
\begin{align}
\label{eq:eq13}
\sum_{\substack{p\leq n\\ p\not\equiv 1,3\pmod{8}}} \beta_p\log p &\leq \sum_{\substack{p \leq n\\ p\equiv 1,3\pmod{8}}} \log(2n^2+1)+\sum_{n < p < 2n} \dfrac{\alpha_p}{\lambda}\log p.
\end{align}
Note that if $p > n$, then $\alpha_p \leq \lambda$ from (\ref{eq:eq4}) and if $p\leq n$, then
\[\beta_p \geq \dfrac{n-1}{p-1}-\dfrac{\log (2n^2+1)}{\log p}.\] 
Using the above bounds for $\alpha_p$ and $\beta_p$ and Lemmas \ref{le:lem8} and \ref{le:lem9} to (\ref{eq:eq13}), we obtain
\begin{align}
\label{eq:eq14}
\sum_{\substack{p\leq n\\ p\not\equiv 1,3\pmod{8}}}\dfrac{\log p}{p-1}\leq \dfrac{\log(2n^2+1)}{n-1}\left(2\log 4\dfrac{n}{\log n}+\sqrt{n}\right)+\dfrac{n\log 4}{n-1}.
\end{align}
As $n$ tends to infinity, the left-hand side of (\ref{eq:eq14}) becomes unbounded, while the right-hand side approaches to $10\log 2$. Since the smallest positive integer $n$ for which
\[\sum_{\substack{p\leq n\\ p\not\equiv 1,3\pmod{8}}}\dfrac{\log p}{p-1} > 10\log 2\]
is $n=706310$, we take $N_1:=706310$ so that $P_{2,1}(n)$ is not a square for $n\geq N_1$.

\section{Proof of Corollary \ref{co:cor2}}
We now apply Theorem \ref{th:thm1} to prove Corollary \ref{co:cor2}.

\begin{proof}
We know from the previous section that $P_{2,1}(n)$ is not a square for $n\geq N_1=706310$. Since $P_{2,1}(1)=3$ and $P_{2,1}(2)=27$ are not squares, it suffices to prove that $P_{2,1}(n)$ is not a square for $3\leq n\leq 706309$. We proceed as follows:
\begin{itemize}
	\item Since $2\cdot 3^2+1=19$ is a prime and the next value of $k > 3$ for which $19$ divides $2k^2+1$ is $k=19-3=16$, we see that $P_{2,1}(n)$ is not a square for $3\leq n\leq 15$.
	\item Since $2\cdot 6^2+1=73$ is a prime and the next value of $k > 6$ for which $73$ divides $2k^2+1$ is $k=73-6=67$, we see that $P_{2,1}(n)$ is not a square for $6\leq n\leq 66$.
	\item Since $2\cdot 21^2+1=883$ is a prime and the next value of $k > 21$ for which $883$ divides $2k^2+1$ is $k=883-21=862$, we see that $P_{2,1}(n)$ is not a square for $21\leq n\leq 861$.
	\item Since $2\cdot 597^2+1=712819$ is a prime and the next value of $k > 597$ for which $712819$ divides $2k^2+1$ is $k=712819-597=712222$, we see that $P_{2,1}(n)$ is not a square for $597\leq n\leq 712221$.
\end{itemize}
Hence, $P_{2,1}(n)$ is not a square for all $n\geq 1$.
\end{proof}

\section{Proof of Corollary \ref{co:cor3}}

We next prove Corollary \ref{co:cor3} using Theorem \ref{th:thm1}.

\begin{proof}
	We set $l=3$ with $r = 1, p_1=3$ and $e_1=1$. In this case, we have $\mathcal{S} = \{p\text{ prime}: p\equiv 1,5,7,11\pmod {24}\}$ and the limit (\ref{eq:eq10}) becomes $10\log 2+ \frac{1}{4}(1+\frac{2}{\sqrt{3}})\log 3\approx 7.523267$. Since the smallest positive integer $n$ for which
	\[\sum_{\substack{p\leq n\\ p\notin\mathcal{S}}}\dfrac{\log p}{p-1} > 10\log 2+ \dfrac{1}{4}\left(1+\dfrac{2}{\sqrt{3}}\right)\log 3\]
	is $n=N_3:=2189634$, we see that $P_{2,3}(n)$ is not a square for all $n\geq N_3$. Since $P_{2,3}(1)=5$ is not a square, it suffices to prove that $P_{2,3}(n)$ is not a square for $2\leq n\leq 2189633$. We proceed as follows:
	\begin{itemize}
		\item Since $2\cdot 2^2+3=11$ is a prime and the next value of $k > 2$ for which $11$ divides $2k^2+3$ is $k=11-2=9$, we see that $P_{2,3}(n)$ is not a square for $2\leq n\leq 8$.
		\item Since $2\cdot 8^2+3=131$ is a prime and the next value of $k > 8$ for which $131$ divides $2k^2+3$ is $k=131-8=123$, we see that $P_{2,3}(n)$ is not a square for $8\leq n\leq 122$.
		\item Since $2\cdot 37^2+3=2741$ is a prime and the next value of $k > 37$ for which $2741$ divides $2k^2+3$ is $k=2741-37=2704$, we see that $P_{2,3}(n)$ is not a square for $37\leq n\leq 2703$.
		\item Since $2\cdot 1048^2+3=2196611$ is a prime and the next value of $k > 1048$ for which $2196611$ divides $2k^2+3$ is $k=2196611-1048=2195563$, we see that $P_{2,3}(n)$ is not a square for $1048\leq n\leq 2199562$.
	\end{itemize}
	Hence, $P_{2,3}(n)$ is not a square for all $n\geq 1$.
\end{proof}

\section{Proof of Corollary \ref{co:cor4}}

We finally prove Corollary \ref{co:cor4} using Theorem \ref{th:thm1}.

\begin{proof}
	We set $l=7$ with $r = 1, p_1=7$ and $e_1=1$. In this case, we have $\mathcal{S} = \{p\text{ prime}: p\equiv 1,3,5,9,13,15,19,23,25,27,39,45\pmod {56}\}$ and the limit (\ref{eq:eq10}) becomes $10\log 2+ \frac{1}{4}(\frac{3}{7}+\frac{2}{3\sqrt{7}})\log 7\approx 7.262543$. Since the smallest positive integer $n$ for which
	\[\sum_{\substack{p\leq n\\ p\notin\mathcal{S}}}\dfrac{\log p}{p-1} > 10\log 2+ \dfrac{1}{4}\left(\dfrac{3}{7}+\dfrac{2}{3\sqrt{7}}\right)\log 7\]
	is $n=N_7:=2142500$, we see that $P_{2,7}(n)$ is not a square for all $n\geq N_7$. By direct calculation, we deduce that the only value of $n$ in $1\leq n\leq 5$ for which $P_{2,7}(n)$ is a square is $n=1$. Thus, it suffices to prove that $P_{2,7}(n)$ is not a square for $6\leq n\leq 2142499$. We proceed as follows:
	\begin{itemize}
		\item Since $2\cdot 6^2+7=79$ is a prime and the next value of $k > 6$ for which $79$ divides $2k^2+7$ is $k=79-6=73$, we see that $P_{2,7}(n)$ is not a square for $6\leq n\leq 72$.
		\item Since $2\cdot 15^2+7=457$ is a prime and the next value of $k > 15$ for which $457$ divides $2k^2+7$ is $k=457-15=432$, we see that $P_{2,7}(n)$ is not a square for $15\leq n\leq 431$.
		\item Since $2\cdot 39^2+7=3049$ is a prime and the next value of $k > 39$ for which $3049$ divides $2k^2+7$ is $k=3049-39=3010$, we see that $P_{2,7}(n)$ is not a square for $39\leq n\leq 3009$.
		\item Since $2\cdot 1041^2+7=2167369$ is a prime and the next value of $k > 1041$ for which $2167369$ divides $2k^2+7$ is $k=2167369-1041=2166328$, we see that $P_{2,7}(n)$ is not a square for $1041\leq n\leq 2166327$.
	\end{itemize}
	Hence, the only value of $n$ for which $P_{2,7}(n)$ is a square is $n=1$.
\end{proof}

%

\end{document}